\theoremstyle{plain}
\newtheorem{thm}{Theorem}[section]
\newtheorem{prb}[thm]{Problem}
\theoremstyle{remark}
\def\pmc#1{\setbox0=\hbox{#1}
    \kern-.1em\copy0\kern-\wd0
    \kern.1em\copy0\kern-\wd0}
\begin{document}
\title
[On noncontractible compacta]{On noncontractible compacta with trivial homology and homotopy groups}
\author[U.H. Karimov]{Umed H. Karimov}
\address{Institute of Mathematics,
Academy of Sciences of Tajikistan, Ul. Ainy $299^A$, Dushanbe
734063, Tajikistan} \email{umedkarimov@gmail.com}
\author[D. Repov\v{s}]{Du\v{s}an Repov\v{s}}
\address{Faculty of Mathematics and Physics, and
Faculty of Education,
University of Ljubljana, P.O.Box 2964,
Ljubljana 1001, Slovenia}
\email{dusan.repovs@guest.arnes.si}
\subjclass[2000]{Primary 54F15, 55N15; Secondary 54G20, 57M05}
\keywords{Noncontractible compactum, weak homotopy equivalence,
reduced complex $\widetilde{K}_{\mathcal{C}}$-theory, admissible
spectrum, Peano continuum, infinite-dimensional Hawaiian earrings,
Hawaiian group}
\begin{abstract}
We construct an example of a Peano continuum $X$ such that: 
(i) $X$ is a one-point compactification of a polyhedron; 
(ii) $X$ is weakly homotopy equivalent to a point (i.e.
 $\pi_n(X)$ is trivial
for all $n \geq 0$); 
(iii) $X$ is noncontractible; and
(iv) $X$ is homologically and cohomologically locally connected (i.e. $X$
is a $HLC$ and  $clc$ space).
We also prove that 
all classical homology groups (singular,
\v{C}ech, and Borel-Moore), 
all classical cohomology groups (singular and \v{C}ech),
and 
all finite-dimensional Hawaiian groups of $X$ are trivial.
\end{abstract}
\dedicatory{ Dedicated to the memory of Professor Evgenij Grigor'evich Sklyarenko (1935-2009)} 
\date{\today}
\maketitle

\section{Introduction}

It is a
fundamental fact of homotopy theory that the existence of a
weak homotopy equivalence $f:K\to L$ between two $CW$-complexes
$K$ and $L$ implies that $f$ is actually a homotopy equivalence
($K\simeq_{w} L \Longrightarrow K\simeq L$).
Therefore if a
$CW$-complex $K$ has all homotopy groups trivial then $K$ is
necessarily contractible \cite{W}.

However, this is no longer true outside the class of
$CW$-complexes, e.g. the Warsaw circle $W$ is an example of a
planar noncontractible non-Peano continuum all of whose homotopy
groups are trivial (cf. e.g. \cite{N}). The failure of local connectivity of
$W$ is crucial, since it is well-known that every planar simply
connected Peano continuum must be contractible (cf. e.g. \cite{N}).

In our earlier paper \cite{KR} we constructed an example of a
noncontractible Peano continuum  with trivial homotopy groups. In
the present paper we shall construct in some sense sharper
example, namely a noncontractible Peano continuum $X$ which is a
one-point compactification of a polyhedron $P,$ which is
homologically locally connected (HLC space) and is weakly homotopy
equivalent to a point $X\simeq_{w} *$.

We shall also prove that all classical homology groups (singular,
\v{C}ech, and Borel-Moore), all classical cohomology groups (singular and \v{C}ech)
and all finite-dimensional Hawaiian earring groups of this space
$X$ are trivial. This answers our problem formulated in \cite{KR}.
We shall also state some new open problems.

\date{\today}

\section{Preliminaries}
We start by fixing some terminology and a notations which will be
used in the proof. All undefined terms can be found in
\cite{Br,H,KR,Kar,W}.

For any topological space $Z$ with a base point $z_0\in Z$
the {\it reduced} suspension $S(Z,z_0)$
is defined by
$$S(Z,z_0) = (Z\times I)/((Z\times
\{0\})\cup(Z\times \{1\})\cup(z_0\times I)),$$
where
$I$   is the
unit interval $I=[0,1]\subset \mathbb R$, and
the {\it unreduced} suspension $S'(Z)$
of $Z$ by
$$S'(Z) =(Z\times I)/((Z\times \{0\})\cup(Z\times \{1\})).$$

The {\it reduced} cone $C(Z,z_0)$ over $Z$ is defined by
$$C(Z,z_0) =(Z\times
I)/((Z\times \{1\})\cup(z_0\times I)),$$ and the {\it unreduced}
cone $C'(Z)$ over $Z$ by
$$C'(Z)=(Z\times I)/(Z\times \{1\}).$$

The $n-$dimensional Hawaiian earrings $(n = 0, 1, 2, \dots)$ is the
following subspace of the Euclidean $(n+1)-$space
$R^{n+1}$:
$${\mathcal{H}}^n = \{\bar x = (x_0, x_1, \dots x_n) \in \mathbb R^{n+1} |\ (x_0 - 1/k)^2 +
\Sigma_{i=1}^{n}x_i^2 = (1/k)^2,\ k\in { \mathbb N}\}.$$

\noindent In other words, ${\mathcal{H}}^n$ is a compact bouquet
of a countable number of $n-$dimensional spheres $S^n_k$ of radius
$1/k$. The point $\theta = (0, 0, \dots)$ is the base point of
${\mathcal{H}}^n.$ Obviously, $S({\mathcal{H}}^n, \theta) \cong
{\mathcal{H}}^{n+1}$ and $\pi_{n+1}(S({\mathcal{H}}^n, \theta))$
is an uncountable group, whereas $\pi_{n+1}(S'({\mathcal{H}}^n))$
is a countable group.

A {\it reduced} complex $\widetilde{K}_{\mathcal{C}}^*$-theory is
an extraordinary cohomology theory defined on the category of
pointed compacta and homotopic mappings with respect to base
points. For any compact pair of spaces $(X,A)$ with the base point
$x_{0}\in A$, $(X,A,x_0)$, there exists the following long exact
sequence (cf. e.g. \cite[p.55]{H}):

\begin{equation}\label{M-V}
\cdots \rightarrow \widetilde{K}_{\mathcal{C}}^n(X/A)\rightarrow
\widetilde{K}_{\mathcal{C}}^n(X)\rightarrow
\widetilde{K}_{\mathcal{C}}^n(A)\rightarrow
\widetilde{K}_{\mathcal{C}}^{n+1}(X/A)\rightarrow \cdots.
\end{equation}

We denote the homotopy classes of mappings with respect to the
base point by $[\ , \ ]$. On the category of connected spaces,
there exists a natural isomorphism of
cofunctors, for some $CW-$ complex $BU$ (cf. e.g. \cite[Theorem
1.32]{Kar}):
$$\widetilde{K}_{\mathcal{C}}^0(X) \cong [X, BU].$$

Every $CW-$complex is an
absolute neighborhood retract (ANR) therefore the functor
$\widetilde{K}_{\mathcal{C}}^0$ is  {\it continuous}, i.e. if
$X_i$ are compact spaces and $X = \underleftarrow\lim X_i,$ then

$$\widetilde{K}_{\mathcal{C}}^0(X) \cong
\underrightarrow\lim\widetilde{K}_{\mathcal{C}}^0(X_i).$$

\section{The construction of examples and the proofs of the main results}

Let $\mathcal P$ be an
inverse sequence of finite $CW$-complexes
$P_i$ :
$$P_0 \stackrel{f_0}{\longleftarrow} P_1
\stackrel{f_1}\longleftarrow P_2 \stackrel{f_2}\longleftarrow
\cdots.$$

Suppose that $P_0 = \{ p_0\}$ is a singleton and that all $P_i$
are {\it regular} finite $CW$-complexes, i.e. that they admit a finite
polyhedral structure. Let $C(f_0, f_1, f_2, \dots)$ be the
infinite mapping cylinder of $\mathcal P $ (cf. e.g. \cite{K, Si})
and let $\widetilde{\mathcal P}$ be its natural compactification
by the inverse limit $\underleftarrow{\lim} \mathcal{P}.$

Then the
space $\widetilde{\mathcal P}$ is an absolute retract (AR) (cf.
\cite{K}). Let $P^*$ be the quotient space of $\widetilde{\mathcal
P}$ by $\underleftarrow{\lim}\mathcal{P}$. Obviously, the space
$P^*$ is homeomorphic to the one-point compactification of the
countable polyhedron $C(f_0, f_1, f_2, \dots)$.

Let $C(f_n, f_{n+1}, \dots, f_m)$ be the finite cylinder of
mappings :
$$P_n \stackrel{f_n}{\longleftarrow} P_{n+1}
\stackrel{f_{n+1}}\longleftarrow \cdots P_m
\stackrel{f_m}\longleftarrow P_{m+1}.$$

Clearly, we may assume
that $$P_n\cup P_{m+1}\subset C(f_n,
f_{n+1}, \dots, f_m)\subset C(f_0, f_1, f_2, \dots).$$

We shall say that an inverse spectrum $\mathcal P$ is {\sl admissible} if the following conditions are satisfied:

\noindent (1) Every $P_i$ contains only one 0-dimensional cell
$p_i$ which is a base point and $f_i(p_{i+1}) = p_i$;

\noindent (2) No $CW$ complex $P_i$ contains any cells of positive
dimension  less than $i$; and

\noindent (3) For $i \geq 0$, every homomorphism
$\widetilde{K}_{\mathcal{C}}^0(f_i)$ is a nontrivial
isomorphism.

Such admissible spectra do exist. For example, the inverse spectra
constructed by Taylor \cite{T} or the suspension of the spectra
constructed by Kahn \cite{Kahn} are admissible (in our sense)
spectra.

\begin{thm}\label{Theorem}
Let $\mathcal{P}$ be an admissible spectrum. Then the one-point
compactification $P^*$ of the countable polyhedron $C(f_0, f_1,
f_2, \dots)$ has the following properties:

(i) $X$ is weakly homotopy equivalent to a point,
$P^*\simeq_{w}*$, (i.e.
 $\pi_n(X)$ is
trivial
for all $n \geq 0$);

(ii) $P^*$ is acyclic with respect to all classical
homology groups (singular, Borel-Moore, and \v{C}ech);

(iii) $P^*$ is acyclic with respect to all classical cohomology
groups (singular and \v{C}ech);

(iv) $P^*$ is a homologically  locally connected ($HLC$);

(v) $P^*$ is a cohomologically  locally connected ($clc$); and

(vi) $P^*$ is a noncontractible Peano continuum.

\end{thm}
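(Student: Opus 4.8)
The plan is to exploit the structure of $P^*$ as a one-point compactification of the infinite mapping cylinder, together with the two numerical hypotheses on the admissible spectrum (connectivity rising with the index, and nontriviality in $\widetilde{K}_{\mathcal{C}}^0$), and to treat the homotopy/homology assertions (i)--(iii) and the local-connectivity assertions (iv)--(v) by quite different means. The key structural observation is that $\widetilde{\mathcal P}$ is an $AR$ and $P^* = \widetilde{\mathcal P}/\underleftarrow{\lim}\mathcal P$, so $P^*$ is the one-point compactification of a contractible polyhedron, and a neighborhood of the special point $\infty$ is built out of the tails $C(f_n,f_{n+1},\dots)$ of the mapping cylinder.

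\medskip

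First I would prove (i). Since $C(f_0,f_1,\dots)$ deformation retracts onto $P_0=\{p_0\}$, the polyhedron itself is contractible, so any nontrivial homotopy of $P^*$ must be ``felt'' only at $\infty$. Using condition (2), that $P_i$ has no positive-dimensional cells below dimension $i$, I would show that for a fixed $n$, any map $f\colon S^n\to P^*$ has image contained in a neighborhood of $\infty$ whose nontrivial cells all have dimension $>n$ (after a simplicial/cellular approximation), hence is null-homotopic; this is the standard ``tails become highly connected'' argument and gives $\pi_n(P^*)=0$ for all $n$. The base point and connectivity are immediate from condition (1). For (iv) and (v), I would again use condition (2): a basic neighborhood system of $\infty$ is given by the images of the tails, and the rising connectivity forces the local homology/cohomology groups $H_k$, $\check H^k$ to vanish in each fixed degree once the index is large enough, which is exactly the $HLC$ and $clc$ conditions at $\infty$; away from $\infty$ the space is a polyhedron, hence locally nice.

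\medskip

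For (ii) and (iii), the exact-sequence machinery is cleaner than direct computation. I would apply the cofiber sequence for the pair $(\widetilde{\mathcal P}, \underleftarrow{\lim}\mathcal P)$, noting $\widetilde{\mathcal P}/\underleftarrow{\lim}\mathcal P = P^*$ (with basepoint $\infty$) and that $\widetilde{\mathcal P}$ is an $AR$, hence acyclic. This reduces the (reduced) homology and cohomology of $P^*$ to that of the inverse limit $L:=\underleftarrow{\lim}\mathcal P$ with a dimension shift. The continuity of \v Cech cohomology and of the $K$-theory functor (stated in the Preliminaries, $\widetilde K_{\mathcal C}^0(L)\cong\underrightarrow\lim\widetilde K_{\mathcal C}^0(P_i)$) lets me transfer the computation to the $P_i$; condition (2) forces the ordinary (co)homology of $L$ in each fixed degree to vanish in the limit, giving acyclicity for singular, \v Cech, and Borel--Moore theories. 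The continuity step and the identification of the relevant limit/$\lim^1$ terms are routine here precisely because the $P_i$ are finite complexes.

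\medskip

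The crux, and the one genuinely delicate point, is (vi): proving $P^*$ is \emph{non}contractible. Every classical (co)homology and homotopy invariant has just been shown to vanish, so contractibility cannot be obstructed by any of them --- this is the whole point of the example. The obstruction must come from condition (3), the nontriviality of $\widetilde K_{\mathcal C}^0(f_i)$. I would argue that if $P^*$ were contractible, then $\widetilde K_{\mathcal C}^0(P^*)=0$; but feeding the pair $(\widetilde{\mathcal P},L)$ into the exact sequence \eqref{M-V} for reduced $\widetilde K_{\mathcal C}$-theory, together with $\widetilde K_{\mathcal C}^*(\widetilde{\mathcal P})=0$ (as $\widetilde{\mathcal P}$ is an $AR$) and continuity $\widetilde K_{\mathcal C}^0(L)\cong\underrightarrow\lim\widetilde K_{\mathcal C}^0(P_i)$, I get that $\widetilde K_{\mathcal C}^0(P^*)$ maps onto (a shift of) the direct limit of the system $\{\widetilde K_{\mathcal C}^0(P_i),\widetilde K_{\mathcal C}^0(f_i)\}$. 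Because each bonding map $\widetilde K_{\mathcal C}^0(f_i)$ is a \emph{nontrivial isomorphism}, this direct limit is nonzero, so $\widetilde K_{\mathcal C}^0(P^*)\neq 0$, contradicting contractibility. That $P^*$ is a Peano continuum (compact, connected, locally connected, metrizable) follows from it being the one-point compactification of a locally finite countable polyhedron whose tails shrink toward $\infty$, which I would verify directly. The main obstacle is thus arranging the $K$-theory exact sequence and continuity argument so that the nontriviality in condition (3) survives to the limit and is not washed out, which is exactly where the hypothesis that each $\widetilde K_{\mathcal C}^0(f_i)$ is an \emph{isomorphism} (not merely nonzero) does the work.
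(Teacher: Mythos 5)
Your plan for (i), (iv), (v), and (vi) is essentially the paper's own proof: cellular approximation together with condition (2) to compress any map $S^n\to P^*$ into a tail and then to the point at infinity; the same tail estimate plus the Hurewicz theorem for $HLC$ at the compactification point (and $clc$ from $HLC$ via Bredon); and for (vi) the exact sequence \eqref{M-V} applied to the pair $(\widetilde{\mathcal{P}},\underleftarrow{\lim}\mathcal{P})$ together with $\widetilde{K}^{*}_{\mathcal{C}}(\widetilde{\mathcal{P}})=0$ and continuity of $\widetilde{K}^{0}_{\mathcal{C}}$. Two small inaccuracies there: after cellular approximation the image of $S^n$ is \emph{not} yet contained in a neighborhood of $\infty$ --- it still meets the low stages $C(f_0,\dots,f_{m-1})$ and the $1$-cells, and one must additionally contract that part into the tail (rel the tail), which is exactly the extra step in the paper; and in (vi) exactness yields nontriviality of $\widetilde{K}^{1}_{\mathcal{C}}(P^*)\cong\widetilde{K}^{0}_{\mathcal{C}}(\underleftarrow{\lim}\mathcal{P})$, not of $\widetilde{K}^{0}_{\mathcal{C}}(P^*)$; either way contractibility is contradicted.

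The genuine gap is in your (ii)--(iii). You propose to compute the homology of $P^*$ from the pair $(\widetilde{\mathcal{P}},L)$, $L=\underleftarrow{\lim}\mathcal{P}$, and then to kill $H_*(L)$ by ``continuity,'' concluding acyclicity ``for singular, \v{C}ech, and Borel--Moore theories.'' This scheme is valid only for theories that are continuous and satisfy strong excision on compacta (\v{C}ech cohomology, $\widetilde{K}_{\mathcal{C}}$-theory); for singular and Borel--Moore homology it fails on two counts. First, singular homology is not continuous: vanishing of $H_k(P_i)$ for $i>k$ gives no information about $H^{\mathrm{sing}}_k(L)$ (compare the solenoid, whose singular homology bears no relation to the limit of the homology of the circles). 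Second, the identification $\widetilde{H}^{\mathrm{sing}}_*(P^*)\cong H^{\mathrm{sing}}_*(\widetilde{\mathcal{P}},L)$ requires $(\widetilde{\mathcal{P}},L)$ to be a good pair, and here $L$ is not even a retract of any of its neighborhoods: the compactified tails $C(f_m,f_{m+1},\dots)\cup L$ form a neighborhood basis of $L$ and are ARs, so a retraction of one of them onto $L$ would force $L$ to be contractible, contradicting $\widetilde{K}^{0}_{\mathcal{C}}(L)\neq 0$ --- the very fact your argument for (vi) relies on. (For \v{C}ech homology over $\mathbb{Z}$ the long exact sequence you invoke is not even exact.) The paper's order of argument is what repairs this: first (i), so that the Hurewicz theorem gives vanishing of the singular homology of $P^*$ directly; then (iv) and the comparison theorem --- all classical homology theories coincide on compact metrizable $HLC$ spaces (Bredon) --- give the \v{C}ech and Borel--Moore statements; and the Universal Coefficient Theorem gives (iii). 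So (ii) and (iii) must be derived from (i) and (iv), not computed independently from the inverse limit.
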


\begin{proof} By equality (\ref{M-V}), we have
the
following natural exact sequence:
$$\cdots \rightarrow
\widetilde{K}_{\mathcal{C}}^{0}(\widetilde{\mathcal{P}})\rightarrow
\widetilde{K}_{\mathcal{C}}^{0}(\underleftarrow{\lim}
\mathcal{P})\rightarrow
\widetilde{K}_{\mathcal{C}}^1(P^*)\rightarrow
\widetilde{K}_{\mathcal{C}}^1(\widetilde{\mathcal{P}})\rightarrow\cdots.$$
As it was mentioned before,   $\widetilde{\mathcal{P}}$ is an absolute retract,
therefore
$\widetilde{K}_{\mathcal{C}}^n(\widetilde{\mathcal{P}})= 0$.
On the other hand, the
group $\widetilde{K}_{\mathcal{C}}^0(\underleftarrow{\lim}
\mathcal{P})$ is nontrivial since the cofunctor
$\widetilde{K}_{\mathcal{C}}^0$ is continuous and all
homomorphisms $\widetilde{K}_{\mathcal{C}}^0(f_i)$ are nontrivial
isomorphisms.
It follows by exactness of the sequence (\ref{M-V})  that
the group
${\widetilde{K}}_{\mathcal{C}}^1(P^*)$ is also nontrivial and hence
the space
$P^*$ must be
noncontractible, as asserted.

Let us prove that all homotopy groups $\pi_{n\ge 1}(P^*)$ are
trivial. Fix a number $n\in \mathbb N.$ Let $m\in \mathbb N$ be
any number such that $m > n.$ Consider a relative $CW-$complex
$(P^*,C(f_m, f_{m+1}, \dots )^*)$ with the compactification point
$\ast$ as the base point. Let $f$ be any mapping of the sphere
$S^n$ with some base point $pt$ to $P^*$ i.e. we have a mapping
$f$ of the relative $CW-$complex $(S^n,pt)$ to $(P^*,C(f_m,
f_{m+1}, \dots )^*).$

By the Cellular Approximation Theorem (cf. e.g. \cite{W}) the
mapping $f$ is homotopic relative $pt$ to a cellular map
$$g_m:(S^n,pt)\to (P^*,C(f_m, f_{m+1}, \dots )^*).$$ Note that the
space $P^*$ can be represented as the union of the following three
spaces:
$$P^* = C(f_0, f_{1}, \dots, f_{m-1})\cup C(f_{m})\cup C(f_{m+1}, \dots
)^*.$$

Observe that $CW-$complex $C(f_{m})$ consists of two 0-dimensional
cells, one 1-dimensional cell and some cells of dimension larger
than $n$, since $m > n$ by our choice of the number $m.$ Since the
mapping $g_m$ is a cellular mapping of the pairs it follows that
the image of $g_m$ lies in the union of the $n-$dimensional
skeleta of $P^*$ and $C(f_{m+1}, \dots )^*.$

Since $C(f_{m})$
contains only one 1-dimensional cell $e^1$ and cells of dimension
larger than $n$ we have
$$\textrm{Im}(g_m)\subset C(f_0, f_{1}, \dots, f_{m-1})\cup e^1\cup C(f_{m+1}, \dots )^*.$$

The space $\textrm{Im}(g_m)\subset C(f_0, f_{1}, \dots,
f_{m-1})\cup e^1$ is contractible with respect to the point
$e^1\cap C(f_{m+1}, \dots )^*$ therefore we may assume that the
mapping $f$ is  homotopic, with respect to the subspace
$C(f_{m+1}, \dots )^*$, to the mapping $g_m$, the image of which
lies in $C(f_{m+1}, \dots )^*$.

Since the relative $CW-$complexes $(C(f_k, f_{k+1}, \dots )^*,
C(f_{k+1}, f_{k+2}, \dots )^*)$ for
$k>m$ contain only one
1-dimensional cell plus only cells of the dimensions larger than
$n$, it follows that $g_m$ (and therefore $f$) is homotopic
 relative to $pt$, to the constant mapping to the
point $\ast$. Therefore $\pi_n(P^*, \ast) = 0.$

The property of homological local connectedness with respect to
singular homology $\textrm{HLC}$ at all points, except at
the base
point, follows by the fact that the space $C(f_0, f_{1}, f_{2},
\dots)$ being a $CW-$ complex, is always locally contractible. As
we have seen, it easy to show that $\pi_n(C(f_m, f_{m+1}, f_{m+2},
\dots)^{\ast},\ast) = 0$. It now follows by the Hurewicz Theorem
that singular homology groups are trivial:
$$H_n(C(f_m, f_{m+1},
f_{m+2}, \dots)^{\ast}, \ast) = 0, \ \ \ \hbox{for} \ \ n < m.$$
Hence the space
$P^*$ is also an $\textrm{HLC}$ space at the base point. The
 $\textrm{clc}$ property
 follows from $\textrm{HLC}$ (cf.
\cite{Br}).

Finally, let us verify the acyclicity of $P^*.$
Since $\pi_n(P^*,
\ast) = 0$ for all $n \geq 0$,
it follows by the Hurewicz Theorem
that
the singular homology groups of $P^*$ are trivial for all $n,$
$H_n(P^*,
\ast) = 0$.
It is well-known that all classical homology are naturally isomorphic
on the category of compact metrizable $\textrm{HLC}$
spaces (cf. e.g.
\cite{Br}).
Therefore the space $P^*$ is acyclic in \v{C}ech, Borel-Moore, and
Vietoris homology theories.
By invoking the  Universal
Coefficients Theorem, we can conclude that over $\mathbb Z$, all
singular, \v{C}ech, Alexander-Spanier, and
sheaf cohomology groups of the space
$P^*$ are trivial, too.
\end{proof}

\section{The infinite-dimensional Hawaiian earrings and the
infinite-dimensional Hawaiian group}

The $n-$dimensional {\it Hawaiian set} of a space $X$, $n\in\{0,1,2,...\},$
with the base
point $x_0\in X$ is defined as set of
all homotopy classes $[f]$ of the
mappings
$$f:({\mathcal{H}}^n, \theta)\to (X, x_0)$$ of the
$n$-dimensional Hawaiian earrings
${\mathcal{H}}^n$
into $X$.
For $n\ge 1$
there is a natural multiplication with respect to which this set
is a group. We denote it by ${\mathcal H}_n(X, x_0)$ and call it
the {\it Hawaiian group} in dimension $n$ 
(cf. \cite{KR}).

The Hawaiian groups $\mathcal H_n(X, x_0)$ (the set $\mathcal
H_0(X, x_0)$) are homotopy invariant in the category of all
topological spaces with base points and continuous mappings.
Note
that for the cone over the 1-dimensional Hawaiian earrings the
group ${\mathcal{H}}_1(C({\mathcal{H}}^1), pt)$ is nontrivial, for
some points $pt$ 
(cf. \cite{KR}).

The space $P^*$ is locally contractible at all points except the
base
point,
and for every natural number $n$ there exists a
neighborhood
$U_{\ast}$ such that $\pi_n(U_{\ast})$ is trivial.
Therefore (since $\pi_n(P^*)=0$) it follows that ${\mathcal{H}}_n(P^*)=0.$

Consider a compact bouquet of Hawaiian earrings of all dimensions
${\mathcal{H}^{\infty}} = \bigvee_{n=1}^{\infty}{\mathcal{H}}^n$
with respect to their base points. Call the space ${\mathcal{H}}$
the {\sl infinite-dimensional Hawaiian earrings}. There is a
natural base point $pt.$
We shall call
the set of all homotopy classes of maps
$[({\mathcal{H}}^{\infty},pt),
(X,x_0)]$ with the natural multiplication  the
infinite-dimensional Hawaiian group
${\mathcal{H}_{\infty}}(X,x_0),$ where $x_0$ is the base point of
the space $X.$

\begin{thm}\label{Theorem 2}
The infinite-dimensional Hawaiian group of the spaces $P^*$
constructed by the admissible spectra of Taylor is nontrivial,
${\mathcal{H}_{\infty}}(P^*,\ast)\neq 0$.
\end{thm}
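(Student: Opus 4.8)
The plan is to exhibit a single based map $f\colon(\mathcal{H}^{\infty},pt)\to(P^*,\ast)$ that is not null-homotopic, which already forces $\mathcal{H}_{\infty}(P^*,\ast)\neq 0$. The detecting principle will be the extraordinary cohomology $\widetilde{K}_{\mathcal{C}}$ already used in Theorem \ref{Theorem}: since this theory is homotopy invariant with respect to base points, a null-homotopic $f$ induces $f^*$ factoring through $\widetilde{K}_{\mathcal{C}}^{*}(\ast)=0$, so it suffices to construct $f$ with $f^*\neq 0$. Thus the whole problem reduces to producing a map out of the infinite-dimensional Hawaiian earrings into $P^*$ that survives in $\widetilde{K}_{\mathcal{C}}$-theory.

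First I would describe $\widetilde{K}_{\mathcal{C}}^{*}(P^*)$ concretely. From the exact sequence (\ref{M-V}) for the pair $(\widetilde{\mathcal{P}},\varprojlim\mathcal{P})$ together with $\widetilde{K}_{\mathcal{C}}^{*}(\widetilde{\mathcal{P}})=0$, one obtains $\widetilde{K}_{\mathcal{C}}^{1}(P^*)\cong\widetilde{K}_{\mathcal{C}}^{0}(\varprojlim\mathcal{P})\cong\varinjlim\widetilde{K}_{\mathcal{C}}^{0}(P_i)\neq 0$, the nonzero generator being the very class that witnessed noncontractibility in Theorem \ref{Theorem}. Dually I would record the $K$-theory of $\mathcal{H}^{\infty}$: each $\mathcal{H}^{n}$ is an inverse limit of its finite subbouquets, so by continuity of $\widetilde{K}_{\mathcal{C}}^{0}$ the space $\mathcal{H}^{\infty}$ carries nontrivial classes coming from its spheres of all dimensions at once — an abundance that every finite-dimensional earring lacks, and which is what makes $\mathcal{H}_{\infty}$ behave differently from the $\mathcal{H}_n$.

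Next I would build $f$ from the specific geometry of Taylor's spectrum, which (unlike a general admissible one) supplies based maps $S^{q_i}\to P_i$, with $q_i\to\infty$, realizing the $\widetilde{K}_{\mathcal{C}}^{0}$-generators coherently with the bonding maps (recall each $\widetilde{K}_{\mathcal{C}}^{0}(f_i)$ is an isomorphism). I would send the $i$-th such sphere into the copy of $P_i$ sitting in the slab $C(f_i)$ of the telescope, i.e. deep inside $P^*$ near $\ast$. Because the higher-dimensional spheres of $\mathcal{H}^{\infty}$ shrink toward $pt$ while the slabs $C(f_i)$ accumulate at $\ast$, the map of the subbouquet $\bigvee_i S^{q_i}\subset\mathcal{H}^{\infty}$ extends continuously by $pt\mapsto\ast$; extending by the constant map on the remaining spheres yields $f\colon\mathcal{H}^{\infty}\to P^*$.

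The heart of the argument, and the step I expect to be the main obstacle, is verifying $f^*\neq 0$. The danger is exactly the phantom phenomenon underlying Taylor's example: each individual sphere sees only a bounded portion of the telescope, and a class detected on finite stages alone can die in the inverse limit, which is why every finite-dimensional $\mathcal{H}_n(P^*)$ vanishes. I would overcome this by exploiting the non-$CW$ wildness of $\mathcal{H}^{\infty}$: the infinitely many sphere maps accumulate at the single point $pt$, so by continuity of $\widetilde{K}_{\mathcal{C}}^{0}$ and naturality against the finite telescope approximations their contributions assemble to the inverse-limit generator of $\widetilde{K}_{\mathcal{C}}^{1}(P^*)$ rather than to any finite-stage class, and this generator pulls back nontrivially. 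This is also what reconciles the result with the vanishing of all finite $\mathcal{H}_n(P^*)$: for each fixed $n$ there is a neighborhood of $\ast$ with trivial $\pi_n$ that kills a single $S^n$, but no neighborhood of $\ast$ kills $\pi_n$ in all dimensions simultaneously, so the accumulated map cannot be compressed away. Once $f^*\neq 0$ is established, $[f]\neq 0$ and hence $\mathcal{H}_{\infty}(P^*,\ast)\neq 0$, as claimed.
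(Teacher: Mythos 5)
Your construction of the map $f$ coincides with the paper's (spheres of $\mathcal{H}^{\infty}$ sent by the Taylor--Toda maps $\psi_i$ into the telescope stages, everything else to $\ast$), but your detection step is not merely unproven --- it is provably impossible, and this is where all the content of the theorem lies. In fact \emph{every} based map $g\colon(\mathcal{H}^{\infty},pt)\to(P^{*},\ast)$ induces the zero homomorphism on $\widetilde{K}_{\mathcal{C}}^{*}(P^{*})$. First, by Theorem \ref{Theorem}(i) all groups $\pi_n(P^{*},\ast)$ vanish, so the restriction of $g$ to each individual sphere of $\mathcal{H}^{\infty}$ is based null-homotopic and therefore pulls every $\widetilde{K}_{\mathcal{C}}$-class back to zero. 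Second --- and here the continuity you invoke works against you, not for you --- write $\mathcal{H}^{\infty}=\varprojlim_{N}B_{N}$, where $B_{N}$ is the bouquet of the first $N$ spheres and the bonding maps collapse the remaining ones; applying continuity of $\widetilde{K}_{\mathcal{C}}^{0}$ to this system and to its suspension (recall $S(\mathcal{H}^{n},\theta)\cong\mathcal{H}^{n+1}$), every class in $\widetilde{K}_{\mathcal{C}}^{*}(\mathcal{H}^{\infty})$ has the form $\pi_{N}^{*}\eta$ for some collapse projection $\pi_{N}\colon\mathcal{H}^{\infty}\to B_{N}$, and since $\pi_{N}$ is split by the inclusion $B_{N}\hookrightarrow\mathcal{H}^{\infty}$, such a class vanishes as soon as its restrictions to the individual spheres vanish. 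So $\widetilde{K}_{\mathcal{C}}^{*}(\mathcal{H}^{\infty})$ is the direct \emph{sum} $\bigoplus_{i}\widetilde{K}_{\mathcal{C}}^{*}(S^{n_{i}})$: contrary to your claim, there are no classes ``coming from spheres of all dimensions at once,'' and the wildness of $\mathcal{H}^{\infty}$ is invisible to any continuous cohomology theory. Combining the two observations gives $g^{*}=0$, so no choice of $f$ can have $f^{*}\neq 0$. (Your auxiliary claim that the $\psi_{i}$ ``realize the $\widetilde{K}_{\mathcal{C}}^{0}$-generators coherently'' is also false: $\psi_{i}$ is the inclusion of the bottom sphere of the suspended Moore space $P_{i+2}$, and since $\widetilde{K}_{\mathcal{C}}^{1}(P_{i+2})=0$ while $\widetilde{K}_{\mathcal{C}}^{0}$ of an odd-dimensional sphere vanishes, each $\psi_{i}$ induces zero on reduced $K$-theory.)

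The idea you are missing is that the essentiality of $f$ must be detected geometrically, from the impossibility of performing the infinitely many null-homotopies by one continuous map; no homotopy-invariant functor pulled back along $f$ can certify this, precisely because each single restriction $f|_{S^{2q-1+2(p-1)i}}$ \emph{is} null-homotopic in $P^{*}$. The paper's argument runs as follows: if $H$ were a based null-homotopy of $f$, then, since the spheres of $\mathcal{H}^{\infty}$ shrink to the base point, for all large $i$ the restriction of $H$ to $S^{2q-1+2(p-1)i}\times[0,1]$ would have image in $P^{*}\setminus\{p_{1}\}$; choosing $m>2q+2(p-1)i$ and applying the Cellular Approximation Theorem, this null-homotopy can be pushed off the high-dimensional cells of the intermediate mapping cylinder, so that its image lies in $C(f_{1},\dots,f_{m-1})\cup e^{1}\cup C(f_{m+1},\dots)^{*}\setminus\{p_{1}\}$; this space retracts onto $P_{2}$, and composing the retraction with $\phi\colon P_{2}\to S^{2q}$ converts $H$ into a null-homotopy of the composite $\phi f_{2}\cdots f_{i+1}\psi_{i}$, which is Toda's nontrivial element $\alpha_{i}$ --- a contradiction. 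That stable-homotopy input (the $\alpha$-family of Toda and Adams), combined with the compactness argument forcing large spheres near $\ast$, is what does the work that $\widetilde{K}_{\mathcal{C}}$-theory cannot do here.
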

\begin{proof}

The inverse spectrum of Taylor can be described as follows. Let
$M$ be the Moore space $ = S^{2q-1}\cup_p e^{2q}, p\geq 3.$ Toda
bracket gives the mapping $f:S^{2(p-1)}(M)\rightarrow M$
of the
$2(p-1)-$fold suspension of
$M$ to
$M.$
Let the space $P_1$ be the
singleton
$\{p_1\}$, $P_2 = M$, $P_{i+2} = S^{2(p-1)i}(M)$ and
$f_2 = f,$
$f_{i+1} = S^{2(p-1)}(f_i)$.
Then we get the desired inverse spectrum.

According to Adams and Toda we have mappings $\phi$ and $\psi_i$
such that the composition $\phi f_2f_3\cdots f_i\psi_i$ is a
nontrivial Toda's element $\alpha_i$ (cf. \cite[Pages 12-13]{Adams
alpha}, \cite{Toda alpha}):

\begin{eqnarray}
{P_1}\stackrel{f_1}\leftarrow{P_2}&\stackrel{f_2}\leftarrow
P_3\stackrel{f_3}\leftarrow\cdots \stackrel{f_{i+1}}\leftarrow
               &P_{i+2}\leftarrow\cdots \\ \nonumber
\varphi\downarrow&  &\uparrow\psi_i \\  \nonumber
{S^{2q}}&
&S^{2q-1+2(p-1)i} \\   \nonumber
\end{eqnarray}

Define the mapping $f:{\mathcal{H}^{\infty}}\to P^*$ as follows.
Consider the compact bouquet of spheres
$\bigvee_{i=1}^{\infty}S^{2q-1+2(p-1)i}$. On every sphere
$S^{2q-1+2(p-1)i}$ we have a mapping $\psi_i$ to $P_{i+2}.$
The set of
all mappings $\{\psi_i\}$ naturally generates the mapping of
$\bigvee_{i=1}^{\infty}S^{2q-1+2(p-1)i}$ to $P^*.$
The space
$\bigvee_{i=1}^{\infty}S^{2q-1+2(p-1)i}$ can be considered as a
subspace of ${\mathcal{H}^{\infty}}$.

Let now $f$ be the extension of this
mapping to the entire
space ${\mathcal{H}^{\infty}}$ which maps the
complement
to $\ast.$ The mapping $f$ is an
essential mapping.

Indeed, suppose that $f$ were inessential. Then we would have a
homotopy $H:{\mathcal{H}^{\infty}}\times [0,1]\to P^*$ such that
$H(\theta, 0)= \ast.$ The restrictions of $H$ on every sphere
$S^{2q-1+2(p-1)i}$ would be inessential in the space $P^\ast
\setminus \{p_1\}$  for large $i$.

For simplicity we shall again denote these
restrictions  by $H.$ So we have for a large $i$ the homotopy
$$H:S^{2q-1+2(p-1)i}\times [0,1]\to P^*,$$
connecting the mapping
$\psi_i$ and the constant mapping in $P^\ast \setminus \{p_1\}.$ The
$CW$ complex $S^{2q-1+2(p-1)i}\times [0,1]$ is an
$(2q+2(p-1)i)$-dimensional complex.

Choose an integer $m > 2q+2(p-1)i$ and
consider the relative $CW$-complex
$(P^*,C(f_{m+1},f_{m+2},\cdots)^*).$
By the Cellular Approximation Theorem
the mapping
$$H:S^{2q-1+2(p-1)i}\times [0,1]\to P^*\setminus \{p_1\}$$
is
homotopy equivalent to a
cellular map, with respect to the set
$S^{2q-1+2(p-1)i}\times \{1\}.$

Since the complex $C(f_{m+1})$
contains only two 0-cells, one 1-cell, plus
cells of dimension larger
than $2q+2(p-1)i$, we may assume
that the image of the
homotopy
$$H:S^{2q-1+2(p-1)i}\times [0,1]\to P^*\setminus \{p_1\}$$
lies in the space
$$C(f_{1}, \dots, f_{m-1})\cup e^1 \cup C(f_{m+1}, \dots
)^*\setminus \{p_1\}.$$

Now, the space $P_2$ is a retract of this space. So we have a
mapping of the sphere $S^{2q-1+2(p-1)i}$ to the sphere $S^{2q}$,
which should be inessential. But this  contradicts  the
nontriviality of the Toda element mentioned above. Therefore
${\mathcal{H}_{\infty}}(P^*,\ast)\neq 0.$

\end{proof}

\section{Epilogue}

Since our example in \cite{KR} is infinite-dimensional, it is
natural to ask the following question \cite{EKR}:

\begin{prb}
Does there exist a finite-dimensional noncontractible Peano
continuum all homotopy groups of which are trivial?
\end{prb}

Our Theorem {\ref{Theorem}} gives an answer to our problem from
\cite{KR} but the cases of finite-dimensional spaces and
infinite-dimensional Hawaiian groups remain open:

\begin{prb}
Let $P$ (resp. $P^{*}$) be the
one-point compactification of
any  finite-dimensional countable polyhedron
by the point  $\theta\in P$ (resp. $\theta^* \in P^{*}$).
Suppose that $f:(P,\theta)\to (P^*, \theta^*)$
is any continuous mapping such that
$$\mathcal{H}_n(f):\mathcal{H}_n(P, \theta)\to \mathcal{H}_n(P^*,\theta^*)$$
 is an isomorphism for every $n\in \mathbb N$.
Is then $f$ a homotopy equivalence?
\end{prb}

\begin{prb} Let $P$ (resp. $P^{*}$) be the
one-point compactification of
a connected polyhedron
by the point  $\theta\in P$ (resp. $\theta^* \in P^{*}$).
Suppose that  $f:(P,\theta)\to (P^*, \theta^*)$
is a continuous mapping such that
$$\mathcal{H}_{\infty}(f):\mathcal{H}_{\infty}(P, \theta)\to \mathcal{H}_{\infty}(P^*,\theta^*)$$
is an isomorphism.
Is then $f$ a homotopy equivalence?
\end{prb}

\section{Acknowledgements}
This research was supported by the Slovenian Research Agency
grants P1-0292-0101, J1-9643-0101 and J1-2057-0101.
We thank the referee for comments and suggestions.

\providecommand{\bysame}{\leavevmode\hbox to3em{\hrulefill}\thinspace}

\end{document}